%
%
%

\documentclass[graybox]{svmult}


\usepackage{type1cm}        
%
\usepackage{makeidx}         
\usepackage{graphicx}        
\usepackage{multicol}        
\usepackage[bottom]{footmisc}

\usepackage{newtxtext}       %
\usepackage[varvw]{newtxmath}       

\usepackage{wrapfig}
\usepackage{caption}
\captionsetup{justification=centering}

\makeindex             

\newcommand{\ep}{\varepsilon}

\newcommand{\wt}{\widetilde}
\newcommand{\h}{\quad}
\newcommand{\dis}{\displaystyle}

\newcommand{\bn}{\mathbf{n}}
\newcommand{\cJ}{\mathcal{J}}
\newcommand{\cP}{\mathcal{P}}
\newcommand{\cW}{\mathcal{W}}


\begin{document}

\title*{On sensitivities regarding shape and topology optimization as derivatives on Wasserstein spaces}
\author{Fumiya Okazaki\orcidID{0009-0005-1585-8397}
and\\ Takayuki Yamada\orcidID{0000-0002-5349-6690}
}
\institute{
Fumiya Okazaki \at Institute of Science Tokyo, Tokyo, Japan, \email{okazaki.f.660b@m.isct.ac.jp}
\and Takayuki Yamada \at The University of Tokyo, Tokyo, Japan, \email{t.yamada@mech.t.u-tokyo.ac.jp}
}

%
%
\titlerunning{\scriptsize{On sensitivities regarding shape and topology optimization as derivatives on Wasserstein spaces}}
\maketitle

\abstract*{Each chapter should be preceded by an abstract (no more than 200 words) that summarizes the content. The abstract will appear \textit{online} at \url{www.SpringerLink.com} and be available with unrestricted access. This allows unregistered users to read the abstract as a teaser for the complete chapter.
Please use the 'starred' version of the \texttt{abstract} command for typesetting the text of the online abstracts (cf. source file of this chapter template \texttt{abstract}) and include them with the source files of your manuscript. Use the plain \texttt{abstract} command if the abstract is also to appear in the printed version of the book.}

\abstract{In this paper, we apply the framework of optimal transport to the formulation of optimal design problems. By considering the Wasserstein space as a set of design variables, we associate each probability measure with a shape configuration of a material in some ways. In particular, we focus on connections between differentials on the Wasserstein space and sensitivities in the standard setting of shape and topology optimization in order to regard the optimization procedure of those problems as gradient flows on the Wasserstein space.}

\section{Introduction}\label{Intro}
Shape and topology optimization are optimization problems to determine the material design and have been widely studied from the viewpoints of both mathematical theory and applications. They are formulated as minimization problems for objective functionals defined on a space of sets. Shape optimization refers to the case where the space of sets consists of ones sharing the same topology, which means we only determine the shape of the boundary. On the other hand, topology optimization incorporates the optimization of the configuration of materials including their topology, which deals with the most general setting of material design problems. In this case, the objective functional is defined on a space of sets without topological constraints and our aim is to find the minimizer of the functional to determine the optimal design.

Although the mathematical formulations of them seem to be challenging as we need to deal with the optimization problem on the space of shapes, which is thought to be a complicated infinite dimensional space, there have been several methods to formulate and implement the optimization. Those methods are roughly classified according to how to describe the material. Let $D \subset \mathbb{R}^d$ be a fixed convex domain. We suppose that we are given a function $\Omega \mapsto \mathcal{J}(\Omega)$ defined on a set $\mathcal{O}_{ad}$ composed of subsets in $D$. One standard method to find a minimizer of $\mathcal{J}$ is to introduce the level set function defined on $D$ corresponding to configuration of material $\Omega \in \mathcal{O}_{ad}$ in such a way that
\begin{align*}
\begin{cases}
\phi > 0\ &\text{on}\ \Omega,\\
\phi = 0\ &\text{on}\ \partial \Omega,\\
\phi < 0\ &\text{on}\ D \backslash \Omega.
\end{cases}
\end{align*}
Then we rewrite the objective function as a function of $\phi$. In order to find a minimizer of the objective function, typically we need to calculate a direction in which the objective function decreases. Since in general the objective function is not Fr\'echet differentiable with respect to the level set function, the direction is often found by employing an alternative sensitivity. In the context of shape optimization, the level set function is updated by the Hamilton-Jacobi equation.
In nice cases, the shape derivative can be obtained as a vector field on the boundary of the current shape and it can describe the movement of the frontier. On the other hand, in order to grasp the sensitivity against the change of the topology, the topological derivative has been calculated in some cases in \cite{Sokolowski99} \cite{Garreau01} \cite{Feijoo03}. The topological derivative is a sensitivity against the creation of a small hole inside the current domain. This sensitivity has been incorporated in the process of optimization in \cite{Allaire06} which considered the shape and topological derivative separately. In \cite{Yamada10}, the method to updated the level set function by the reaction-diffusion equation has been introduced and the topological derivative has been used as a substitution for the gradient of objective functions in the usual sense. See Definitions \ref{shapeder} and \ref{topologyder} for details of these sensitivities.

The first method for solving the problem of topology optimization originated from \cite{BK88} was based on the theory of homogenization and the simple isotropic material with penalization method, which is referred to as SIMP method, was introduced in \cite{Bendsoe89}. We simply refer to the latter method as the density method in this paper. In this method, we consider a nonnegative function $\rho \in L^{\infty}(D)$ and regard it as a distribution of material. Then we optimize the functional defined on $L^{\infty}(D)$ instead of the original one defined on the set of shapes. In this case, we can find a descent direction by calculating the Fr\'echet differential on $L^{\infty}(D)$. It is known that in typical cases the differential can be explicitly calculating by the adjoint method (See Example \ref{exElasticity}.)

In this paper, we focus on methods to regard shape and topology optimization as optimization problems on infinite dimensional manifolds which consists of configurations on a fixed domain. Formulations of shape optimization from this viewpoint has been developed recently. In \cite{MM06} \cite{MM07}, several Riemannian metrics on the space of shapes have been investigated. In general, the structure of a Riemannian manifold determines the gradient of functions defined on the space, which leads us to formulate optimization problems. Based on those Riemannian structures, the shape optimization has been regarded as the optimization problem on the space of shapes and the relation between the gradient with respect to the Riemannian metric and the shape derivative of objective functions was clarified in \cite{Arguillere14} \cite{Schulz14} \cite{Welker21}.

On the other hand, we can also choose the Wasserstein space, which consists of probability measures on the base space, as a space which can describe the distribution of materials. By virtue of the Otto calculus, the Wasserstein space can be thought to have a structure of an infinite dimensional Riemannian manifold, namely, we can construct the notion of smooth curves, tangent spaces, differentials and gradient flows on the Wasserstein space. In particular, we can formulate the optimization procedure on the Wasserstein space by the gradient flow associated with a functional defined on the space. The aim of this article is to reformulate shape and topology optimization as optimization problems on Wasserstein spaces and reconstruct sensitivities of objective functions from the differential structure on those spaces. 

This point of view can be seen in \cite{Buttazzo01} which focuses on the shape optimization problem with a mass constraint and writes down the systems which optimal distributions of materials need to satisfy. In \cite{MM07} referred to above, the comparison of the Riemannian metric on the space of shapes and the Wasserstein distance is also mentioned by making the boundary of shapes correspond to the uniform distribution on the boundary. However, it seems that the gradient of functionals on the Wassetstein space has not yet been calculated in the context of the topology optimization. Our formulation is closer to the one in \cite{Buttazzo01} and we incorporate the change of the topology. In Section \ref{opt}, we propose the two kinds of formulation. The first one in Subsection \ref{Formulation1} is to regard the supports of measures as shapes of a material. As readers can see later in Propositions \ref{WassShape1}, \ref{WassShape2} and \ref{WassTop}, in this case we can easily check that the shape and topological derivatives can be realized as directional derivatives or higher-order differentials along some absolutely continuous curves on the Wasserstein space. Our second formulation in Subsection \ref{Formulation2} consists in the density method but we consider updating the distribution of material by the gradient flow on the Wasserstein distance. We will see in Proposition \ref{WassDens} that the direction to update is given by the gradient of the Fr\'echet differential on $L^{\infty}(\mathbb{R}^d)$ in nice cases. Consequently this method to update turned out to be close to the level set method updated by Hamilton-Jacobi equation.

The outline of this paper is as follows. In Section \ref{transport}, we recall some basic settings and facts regarding the theory of optimal transport. We extract only a few contents which seem to be crucial in this paper from the tremendous theory of optimal transport. In most parts of this section, we refer to \cite{AGS21}. In Section \ref{opt}, we suggest two types of formulations of the shape and topology optimization as mentioned above and reconstruct some sensitivities of objective functions as differentials on the Wasserstein space.

\section{Preliminary}\label{transport} 
First we recall the basic setting for optimal transport problems. Denote the set of Borel probability measures on $\mathbb{R}^d$ by $\mathcal{P}(\mathbb{R}^d)$. For a Borel measurable map $\Phi \colon \mathbb{R}^d \to \mathbb{R}^d$ and $\mu \in \mathcal{P}(\mathbb{R}^d)$, we denote the push-forward measure by $\Phi_{\sharp}\mu$, which is defined by
\[
\Phi_{\sharp}\mu (A):= \mu(\Phi^{-1}(A))\ \text{for}\ A\in \mathcal{B}(\mathbb{R}^d).
\]
For $\mu, \nu \in \mathcal{P}(\mathbb{R}^d)$, let $\mathcal{C}(\mu,\nu)$ be the set of couplings of $\mu$ and $\nu$, namely,
\[
\mathcal{C}(\mu,\nu):=\{ \pi \in \mathcal{P}(\mathbb{R}^d\times \mathbb{R}^d) \mid P_{1 \sharp}\pi=\mu,\ P_{2 \sharp}\pi=\nu \},
\]
where $P_i \colon \mathbb{R}^d \times \mathbb{R}^d \to \mathbb{R}^d$ ($i=1,2$) is the projection to $i$-th $\mathbb{R}^d$. For $\mu, \nu \in \mathcal{P}(\mathbb{R}^d)$, the optimal transport problem for the Euclidean metric on $\mathbb{R}^d$ in the sense of Kantorovich is to find
\[
I(\mu, \nu) := \inf \left\{ \int_{\mathbb{R}^d\times \mathbb{R}^d} \frac{|x-y|^2}{2} \, \pi(dxdy) \mid \pi \in \mathcal{C}(\mu, \nu) \right\}
\]
and the minimizer $\pi \in \mathcal{C}(\mu,\nu)$, which can be shown to exist by the lower semi-continuity of the distance function and the fact that $\mathcal{C}(\mu,\nu)$ is tight, convex, weakly closed in $\mathcal{P}(\mathbb{R}^d \times \mathbb{R}^d)$. The minimizer is called the optimal coupling. We recall the celebrated Brenier's theorem regarding specific cases where $\mu \ll dx$, i.e. $\mu$ is absolutely continuous with respect to the Lebesgue measure on $\mathbb{R}^d$. In these cases, there exists a unique optimal plan $\pi$ and it is described as
\begin{align}\label{Brenier}
\pi = (\mathrm{Id} \times \nabla \phi)_{\sharp}\mu
\end{align}
for some convex function $\phi$, where $\mathrm{Id}$ stands for the identity map on $\mathbb{R}^d$. Here we can ignore the zero set in which $\phi$ is not differentiable since the measure $\mu$ is absolutely continuous with respect to $dx$. We let
\[
\mathcal{P}_2(\mathbb{R}^d):=\left\{ \mu \in \mathcal{P}(\mathbb{R}^d) \left| \int_{\mathbb{R}^d} |x|^2 \, \mu(dx) < \infty \right. \right\}
\]
and simply call the space the Wasserstein space in this paper. Define the $L^2$-Wasserstein distance $\mathcal{W}_2$ on $\mathcal{P}_2(\mathbb{R}^d)$ by
\[
\mathcal{W}_2(\mu, \nu):= I(\mu, \nu)^{\frac{1}{2}}\ \text{for}\ \mu,\nu \in \mathcal{P}_2(\mathbb{R}^d).
\]
It is known that $\mathcal{W}_2$ is actually a distance function on $\mathcal{P}_2(\mathbb{R}^d)$. One of the most significant facts on the Wasserstein space $(\mathcal{P}_2(\mathbb{R}^d),\mathcal{W}_2)$ is that it is not just a metric space, but has a kind of an infinite dimensional Riemannian structure. Namely, we can introduce the tangent space at each $\mu \in \mathcal{P}_2(\mathbb{R}^d)$ through considering absolutely continuous curves on $(\mathcal{P}_2(\mathbb{R}^d),\mathcal{W}_2)$, which play a role of smooth curves in ordinary differentiable manifolds. Here we say a curve $\{ \mu_t\}_{t\in [0,T]}$ on $(\mathcal{P}_2(\mathbb{R}^d),\mathcal{W}_2)$ is absolutely continuous if there exists $f\in L^1([0,T])$ such that
\[
\mathcal{W}_2(\mu_s,\mu_t) \leq \int_s^tf(r)\, dr
\]
for all $s,t \in [0,T]$ with $s\leq t$. If $\{ \mu_t\}_{t\in [0,T]}$ is absolutely continuous, for a.e. $t\in [0,T]$ the curve $\mu_t$ admits the metric derivative $|\mu'_t| \in L^1([0,T])$ defined by
\[
|\mu'_t|:= \lim_{\ep \to 0} \frac{\mathcal{W}_2(\mu_t,\mu_{t+\ep})}{|\ep|}.
\]
Absolutely continuous curves on $(\mathcal{P}_2(\mathbb{R}^d),\mathcal{W}_2)$ can be characterized through the continuity equation for measures. First, for simplicity, let $\theta \in W^{1,\infty}(\mathbb{R}^d; \mathbb{R}^d)$, where $W^{1,\infty}(\mathbb{R}^d;\mathbb{R}^d)$ is the $(1,\infty)$-Sobolev space on $\mathbb{R}^d$ with values in $\mathbb{R}^d$. Then the ordinary differential equation
\begin{align*}
\begin{cases}
\frac{d}{dt}\Phi_t(x)&=\theta (\Phi_t(x)),\\
\Phi_0&=\mathrm{Id}
\end{cases}
\end{align*}
admits a global solution for every $x \in \mathbb{R}^d$ since $\theta$ is Lipschitz. For $\rho dx \in \mathcal{P}_2(\mathbb{R}^d)$, we set $\rho_tdx := \Phi_{t\sharp}(\rho dx)$. Then we can easily check that for every $\psi(t,x) \in C^{\infty}_0((0,T)\times \mathbb{R}^d)$, it holds that
\begin{align*}
\int_0^T\int_{\mathbb{R}^d} \psi (t,x)& \partial_t \rho_t(x)\, dxdt\\
&=-\int_0^T\int_{\mathbb{R}^d} \partial_t\psi (t,x)\rho_t (x)\, dxdt \\
&=-\int_0^T\int_{\mathbb{R}^d} \partial_t \psi (t,\Phi_t(x)) \rho (x)\, dxdt\\
&=-\int_0^T\int_{\mathbb{R}^d} \left( \frac{d}{dt} \psi (t,\Phi_t(x)) - \nabla_x \psi (t, x) \cdot \theta (x) \right)\rho(x) \, dxdt\\
&=-\int_0^T\int_{\mathbb{R}^d} \psi (t,x) \mathrm{div}\left(\rho \theta \right)(x)\, dx.
\end{align*}
Thus $\rho_t$ satisfies
\begin{align}\label{conti}
\partial_t\rho_t(x)+\mathrm{div}\left( \rho_t \theta \right) (x)=0,
\end{align}
which is called the continuity equation. In general, the continuity equation for measures in a distributional sense is defined as follows.
\begin{definition}
Let $T>0$, $\{\mu_t\}_{t\in [0,T]}$ a curve on $\mathcal{P}_2(\mathbb{R}^d)$ and $\theta \colon [0,T] \times \mathbb{R}^d \to \mathbb{R}^d$ a time-dependent Borel vector field. We say that the curve $\mu_t$ satisfies the continuity equation with respect to $\theta$ in the sense of distribution if for all $\phi \in C_0((0,T)\times \mathbb{R}^d)$,
\[
\int_0^T\int_{\mathbb{R}^d} \left(\partial_t\phi(t,x) +\nabla_x \phi(t,x) \cdot \theta_t(x) \right)\, d\mu_t dt=0.
\]
\end{definition}
It is known that for every absolutely continuous curve $\{ \mu_t\}_{t\in [0,T]}$ on $\mathcal{P}_2(\mathbb{R}^d)$ with
\begin{align}
\int_0^T|\mu'_t|\, dt<\infty,\label{integrability}
\end{align}
there exists a Borel vector field $\theta \colon [0,T] \times \mathbb{R}^d \to \mathbb{R}^d$ with
\[
\int_0^T \| \theta_t \|_{L^2(\mu_t)}\, dt \leq \int_0^T|\mu'_t|\, dt
\]
such that $\mu_t$ satisfies the continuity equation with respect to $\theta$. Conversely, if a narrowly continuous curve $\{ \mu_t\}_{t\in [0,T]}$ (which means that $\mu_s \to \mu_t$ weakly as a sequence of measures as $s \to t$ for each $t\in [0,T]$) on $\mathcal{P}_2(\mathbb{R}^d)$ satisfies the continuity equation for some vector field $\theta \colon [0,T] \times \mathbb{R}^d \to \mathbb{R}^d$ with
\[
\int_0^T \| \theta_t \|_{L^2(\mu_t)}\, dt < \infty,
\]
then $\{ \mu_t \}_{t\in [0,T]}$ is absolutely continuous for $\mathcal{W}_2$. Note that the uniqueness of velocity vector fields satisfying the continuity equation is not guaranteed for every absolutely continuous curve $\mu_t$ without any conditions for velocity vector fields. Thus in order to determine velocity vector fields associated with absolutely continuous curves, we impose an additional condition for the energy. Let $L^2(\mu, \mathbb{R}^d; \mathbb{R}^d)$ be $\mathbb{R}^d$-valued $L^2$-space with respect to the measure $\mu \in \mathcal{P}_2(\mathbb{R}^d)$. Then it is known that we can take a unique $\theta_t \colon \mathbb{R}^d \to \mathbb{R}^d$ such that $\theta_t \in L^2(\mu_t, \mathbb{R}^d; \mathbb{R}^d)$ and for a.e. $t \in [0,T]$,
\begin{align}\label{projection}
\| \theta_t + \eta \|_{L^2(\mu_t)} \geq \| \theta_t\|_{L^2(\mu_t)} \ \text{for all}\ \eta_t \in L^2(\mu_t, \mathbb{R}^d; \mathbb{R}^d) \ \text{with}\ \mathrm{div}(\mu_t \eta_t)=0. 
\end{align}
If we set
\begin{align}\label{tangent}
T_{\mu}\mathcal{P}_2(\mathbb{R}^d):= \overline{\{ \nabla \phi \mid \phi \in C_0^{\infty}(\mathbb{R}^d) \}}^{L^2(\mu)},
\end{align}
then $\theta_t$ satisfies \eqref{projection} if and only if $\theta_t \in T_{\mu_t}\mathcal{P}_2(\mathbb{R}^d)$ for a.e. $t\in [0,T]$. Thus we can regard the space defined by \eqref{tangent} as the tangent space at $\mu$ and velocity vector fields associated with absolutely continuous curves with \eqref{integrability} can be described as gradients of potentials $\phi \colon \mathbb{R}^d \to \mathbb{R}$.

\begin{definition}
Let $\wt{\cJ} \colon \cP_2(\mathbb{R}^d) \to \mathbb{R}$ be a function. The tangent vector $\nabla^{\cW}\wt{\cJ}(\mu) \in T_{\mu}\cP_2(\mathbb{R}^d)$ is called the gradient of $\wt{\cJ}$ at $\mu$ if for any absolutely continuous curve $\mu_t$ on $\cP_2(\mathbb{R}^d)$ satisfying the continuity equation with a velocity vector field in the form of $\nabla \phi_t$, it holds that
\[
\left( \frac{d}{dt}\right)_{t=0} \wt{\cJ}(\mu_t)=\langle \nabla^{\cW}\wt{\cJ}(\mu), \nabla \phi_0 \rangle_{L^2(\mu)}. 
\]
\end{definition}
Following this definition of the gradient of a function $\wt{\cJ}$, we can derive the gradient flow
\[
\partial_t \rho_t = \mathrm{div}(\rho_t \nabla^{\cW}\cJ(\rho_t)).
\]
Roughly speaking, the gradient flow means that we find the decent direction of $\wt{\cJ}$ as a vector field and transport the sand pile $\rho dx$ following the flow of the vector field (Fig \ref{figupdate}).
\begin{figure}[h]
\centering
\includegraphics[width=7.0cm]{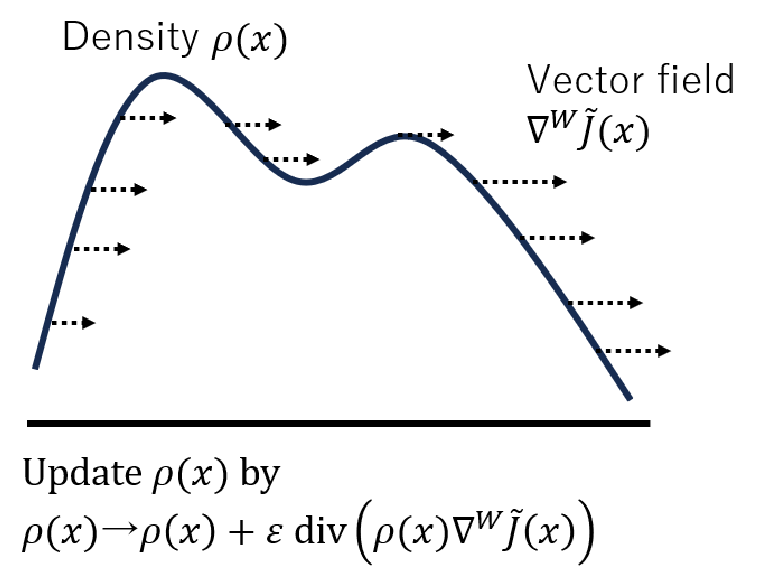}
\caption{}
\label{figupdate}
\end{figure}
\section{Sensitivities}\label{opt}
To begin with, we recall the basic setting for the shape and topology optimization.
We denote the set of all open subsets in $\mathbb{R}^d$ by $\mathcal{O}$. Let $\mathcal{O}_{ad}$ be a subfamily of $\mathcal{O}$. We consider a function $\mathcal{J}\colon \mathcal{O}_{ad} \to \mathbb{R}$. In many cases, $\mathcal{J}$ is of the form as follows: We suppose that we are given a system described by a PDE on an open subset $\Omega$. Let $H$ be a Hilbert space in which solutions of the system live. Let $a_{\Omega}$ be a bounded coercive symmetric bilinear form on $H$ depending on $\Omega \in \mathcal{O}_{ad}$. In the same way, let $l_{\Omega}$ be a bounded linear functional on $H$ depending on $\Omega \in \mathcal{O}_{ad}$. Then there exists a unique $u_{\Omega} \in H$ satisfying
\begin{align}\label{governingEq}
a_{\Omega}(u_{\Omega},v)=\langle l_{\Omega}, v \rangle \ \text{for all}\ v\in H
\end{align}
by Lax-Milgram's theorem. Let $J \colon \mathcal{O}_{ad} \times H \to \mathbb{R}$ and suppose that the functional $\mathcal{J} \colon \mathcal{O}_{ad} \to \mathbb{R}$ is given by
\[
\mathcal{J}(\Omega)=J(\Omega, u_{\Omega}).
\]
In order to apply the gradient descent method to the functional $\mathcal{J}$, we need to consider the perturbation of a domain in $\mathcal{O}_{ad}$. One natural way to give a perturbation is to take a vector field $\theta \colon \mathbb{R}^d \to \mathbb{R}^d$ and consider the image $(\mathrm{Id} + \theta)(\Omega)$. Here this procedure needs the condition for $\mathcal{O}_{ad}$ and a space of vector fields $\Theta_{ad}$ such that $\mathcal{O}_{ad}$ contains all the perturbated domain $(\mathrm{Id} + \theta)(\Omega)$ for $\Omega \in \mathcal{O}_{ad}$ and all small enough $\theta$ in terms of a suitable norm on $\Theta_{ad}$. For instance, if we suppose that $\mathcal{O}_{ad}$ contains $C^k$-domains, one can set
\begin{align*}
\Theta_{ad}&=C^{k,\infty}(\mathbb{R}^d;\mathbb{R}^d)\\
&=\left\{ \theta \in C^k(\mathbb{R}^d; \mathbb{R}^d)\ ;\ \| \theta \|_{C^k}<\infty  \right\}
\end{align*}
as a set of vector fields, where
\[
\| \theta \|_{C^k}= \sup_{|\alpha|\leq k}\| \partial_{\alpha} \theta \|_{L^{\infty}(\mathbb{R}^d)}
\]
and $\alpha$ stands for a multi index (See Remark 4.1 of \cite{Allaire21}). Now we recall the sensitivity of the functional regarding the above perturbation.
\begin{definition}\label{shapeder}
A function $\mathcal{J} \colon \mathcal{O}_{ad} \to \mathbb{R}$ is said to be shape-differentiable at $\Omega \in \mathcal{O}_{ad}$ if the functional 
\[
\theta \mapsto \mathcal{J}((\mathrm{Id}+\theta)(\Omega))
\]
defined on $W^{1,\infty}(\mathbb{R}^d ; \mathbb{R}^d)$ is Fr\'echet differentiable at $\theta=0$, namely, there exists a bounded linear functional $d_S\mathcal{J}(\Omega)$ on $W^{1,\infty}(\mathbb{R}^d;\mathbb{R}^d)$ such that
\[
\mathcal{J}((\mathrm{Id}+\theta)(\Omega))=\mathcal{J}(\Omega)+\langle d_S\mathcal{J}(\Omega), \theta \rangle + o(\theta),
\]
where
\[
\frac{o(\theta)}{\| \theta \|_{W^{1,\infty}(\mathbb{R}^d ; \mathbb{R}^d)}} \to 0\ \text{as}\ \| \theta \|_{W^{1,\infty}(\mathbb{R}^d ; \mathbb{R}^d)} \to 0.
\]
In this case the functional $d_S\mathcal{J}(\Omega)$ is called the shape derivative of $\mathcal{J}$ at $\Omega$.
\end{definition}
In the process of the shape optimization, we need to calculate the shape derivative of the objective function and find a vector field which decreases the value of it by the deformation of the domain. It is known that in typical nice cases the shape derivative can be obtained in the form of
\[
\langle d_S\mathcal{J}(\Omega), \theta \rangle = \int_{\partial \Omega} g \langle \theta, \mathbf{n}\rangle \, d\mathrm{vol}_{\partial \Omega},
\]
where $g \in L^1(\partial \Omega)$ and $\mathbf{n}$ is the outward normal unit vector field.

Since the above deformation is homeomorphic, theoretically the topology of the domain does not change in the process of optimization. Thus in order to incorporate changes of topology and implement the optimization including the the topology of the material, we need to employ other sensitivities. Here we recall the topological sensitivity defined through the perturbation by the creation of small holes.
\begin{definition}\label{topologyder}
Let $B \subset \mathbb{R}^d$ be an open subset. A function $\mathcal{J} \colon \mathcal{O}_{ad} \to \mathbb{R}$ is said to admit a topological derivative at $\Omega \in \mathcal{O}_{ad}$ if there exists a strictly increasing continuous function $r \colon [0,\infty) \to [0,\infty)$ such that the limit
\[
\lim_{\ep \to 0} \frac{\mathcal{J}(\Omega \backslash \overline{(z + \ep B)})-\mathcal{J}(\Omega)}{r(\ep)}
\]
exists for every $z\in \Omega$. The limit is called the topological derivative for the hall $B$ with a rate $r$ and denoted by $d_T\mathcal{J}(\Omega,B,z)$.
\end{definition}
The rate $r$ depends on the functional, but we focus on the typical case where $B$ is the unit ball $B_1(0)$ centered at the origin in $\mathbb{R}^d$ and $r(\ep)=|B_{\ep}(0)|$. In this case we simply call the function $z\mapsto \mathcal{J}(\Omega, B_1(0),|B_{\cdot}(0)|)$ the topological derivative at $\Omega$ and denote it by $d_T\mathcal{J}(\Omega,z)$.

\subsection{Formulation based on the support of measures}\label{Formulation1}
We reformulate the above setting of shape optimization in view of the optimization on Wasserstein spaces. In this framework, we regard probability measures on $\mathbb{R}^d$ as distributions of materials. For $\mu \in \mathcal{P}(\mathbb{R}^d)$, let
\[
\Omega_{\mu}:=\mathrm{Int}(\mathrm{supp}[\mu]),
\]
where $\mathrm{Int}$ stands for the interior. We define the projection $\pi \colon \mathcal{P}_2(\mathbb{R}^d) \to \mathcal{O}$ by
\[
\pi (\mu):= \Omega_{\mu}.
\]
Obviously $\pi$ is surjective. In fact, each non-empty $\Omega \in \mathcal{O}$ has strictly positive Lebesgue measure. Thus if we let $\rho$ be the probability density of $d$-dimensional Gaussian distribution, then
\[
\pi \left(\left( \int_{\Omega}\rho dm \right)^{-1}\mathbf{1}_{\Omega}\rho \cdot m \right) =\Omega,
\]
where $m$ is the Lebesgue measure on $\mathbb{R}^d$ and $\mathbf{1}_{\Omega}$ is the characteristic function of $\Omega$. As for the case where $\Omega=\emptyset$, by taking the delta distribution at $0$, we have
\[
\pi(\delta_0)=\emptyset.
\]
We set $\mathcal{P}_{ad}(\mathbb{R}^d)=\pi^{-1}(\mathcal{O}_{ad})$, namely,
\[
\mathcal{P}_{ad}(\mathbb{R}^d):=\{ \mu \in \mathcal{P}_2(\mathbb{R}^d) \mid \Omega_{\mu}\in \mathcal{O}_{\mathrm{ad}} \}.
\]
For a function $\mathcal{J} \colon \mathcal{O}_{ad} \to \mathbb{R}$, we set
\[
\wt{\mathcal{J}}:=\mathcal{J} \circ \pi \colon \mathcal{P}_{ad}(\mathbb{R}^d) \to \mathbb{R}.
\]
\begin{proposition}\label{WassShape1}
Let $\mu \in \mathcal{P}_{ad}(\mathbb{R}^d)$. Assume that a functional $\mathcal{J}$ defined on a family $\mathcal{O}_{ad}$ is shape-differentiable at $\Omega_{\mu}$. Then for each $\theta \in W^{1, \infty}(\mathbb{R}^d; \mathbb{R}^d)$, the curve $t\mapsto (\mathrm{Id}+t\theta)_{\sharp}\mu$ is in $\mathcal{P}_{ad}(\mathbb{R}^d)$ for small $t>0$ and
\[
t \mapsto \wt{\mathcal{J}}((\mathrm{Id}+t\theta)_{\sharp}\mu)
\]
is differentiable at $t=0$. Moreover the differential is equal to $\langle d_S \mathcal{J}(\Omega_{\mu}), \theta \rangle$.
\end{proposition}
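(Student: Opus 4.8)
Write $\Phi_t:=\mathrm{Id}+t\theta$. The whole statement reduces to the single identity $\pi(\Phi_{t\sharp}\mu)=\Phi_t(\Omega_\mu)$, after which one substitutes the vector field $t\theta$ into Definition~\ref{shapeder}; so the plan is to secure this identity together with the membership $\Phi_{t\sharp}\mu\in\mathcal{P}_{ad}(\mathbb{R}^d)$, and then read off the derivative. As a preliminary, since $\theta\in W^{1,\infty}(\mathbb{R}^d;\mathbb{R}^d)$ a continuous representative of $\theta$ is bounded and globally Lipschitz, say with constant $L$; for $0<t<1/L$ the bound $|\Phi_t(x)-\Phi_t(y)|\ge(1-tL)|x-y|$ makes $\Phi_t$ injective and $|\Phi_t(x)|\ge|x|-t\|\theta\|_{L^\infty}$ makes it proper, so by invariance of domain $\Phi_t(\mathbb{R}^d)$ is open, by properness it is closed, and connectedness of $\mathbb{R}^d$ forces $\Phi_t$ to be a homeomorphism of $\mathbb{R}^d$ onto itself (alternatively $\Phi_t^{-1}$ is produced by a contraction argument). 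Moreover $|\Phi_t(x)|^2\le 2|x|^2+2t^2\|\theta\|_{L^\infty}^2$ and $\mu\in\mathcal{P}_2(\mathbb{R}^d)$ give $\Phi_{t\sharp}\mu\in\mathcal{P}_2(\mathbb{R}^d)$.

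For the key identity I would first record the general fact that for any homeomorphism $\Phi$ of $\mathbb{R}^d$ and any $\nu\in\mathcal{P}(\mathbb{R}^d)$ one has $\mathrm{supp}[\Phi_\sharp\nu]=\Phi(\mathrm{supp}[\nu])$. Indeed $\Phi(\mathrm{supp}[\nu])$ is closed (a homeomorphism is a closed map) and $\Phi_\sharp\nu(\Phi(\mathrm{supp}[\nu]))=\nu(\Phi^{-1}(\Phi(\mathrm{supp}[\nu])))=\nu(\mathrm{supp}[\nu])=1$ by injectivity, whence $\mathrm{supp}[\Phi_\sharp\nu]\subseteq\Phi(\mathrm{supp}[\nu])$; conversely, if $x\in\mathrm{supp}[\nu]$ then for every open neighbourhood $U$ of $\Phi(x)$ the set $\Phi^{-1}(U)$ is an open neighbourhood of $x$, so $\Phi_\sharp\nu(U)=\nu(\Phi^{-1}(U))>0$ and $\Phi(x)\in\mathrm{supp}[\Phi_\sharp\nu]$. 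Applying this with $\Phi=\Phi_t$, $\nu=\mu$, and using that a homeomorphism intertwines with the interior operation ($\Phi_t(\mathrm{Int}(A))=\mathrm{Int}(\Phi_t(A))$ for every $A$), one gets $\pi(\Phi_{t\sharp}\mu)=\mathrm{Int}(\mathrm{supp}[\Phi_{t\sharp}\mu])=\mathrm{Int}(\Phi_t(\mathrm{supp}[\mu]))=\Phi_t(\mathrm{Int}(\mathrm{supp}[\mu]))=\Phi_t(\Omega_\mu)$.

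To conclude: since $\mathcal{J}$ is shape-differentiable at $\Omega_\mu$, the set-up preceding Definition~\ref{shapeder} guarantees $(\mathrm{Id}+t\theta)(\Omega_\mu)\in\mathcal{O}_{ad}$ for $t$ small; together with $\Phi_{t\sharp}\mu\in\mathcal{P}_2(\mathbb{R}^d)$ and the previous step this yields $\Phi_{t\sharp}\mu\in\pi^{-1}(\mathcal{O}_{ad})=\mathcal{P}_{ad}(\mathbb{R}^d)$. Hence $\wt{\mathcal{J}}(\Phi_{t\sharp}\mu)=\mathcal{J}(\pi(\Phi_{t\sharp}\mu))=\mathcal{J}((\mathrm{Id}+t\theta)(\Omega_\mu))$, and evaluating the Fr\'echet expansion of Definition~\ref{shapeder} at the vector field $t\theta$, with the linearity of $d_S\mathcal{J}(\Omega_\mu)$ and $\|t\theta\|_{W^{1,\infty}}=t\|\theta\|_{W^{1,\infty}}$, gives $\wt{\mathcal{J}}(\Phi_{t\sharp}\mu)=\mathcal{J}(\Omega_\mu)+t\langle d_S\mathcal{J}(\Omega_\mu),\theta\rangle+o(t)$ as $t\to0$. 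Dividing by $t$ and letting $t\to0$ produces the asserted differentiability and the value $\langle d_S\mathcal{J}(\Omega_\mu),\theta\rangle$.

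I expect the main obstacle to be entirely point-set-topological: verifying that $\Phi_t$ is a homeomorphism of all of $\mathbb{R}^d$ (not merely onto its image) for a non-smooth $\theta\in W^{1,\infty}$, and that $\mathrm{Int}(\cdot)$ commutes with $\Phi_t$. Once the identity $\pi(\Phi_{t\sharp}\mu)=\Phi_t(\Omega_\mu)$ is established, the membership in $\mathcal{P}_{ad}(\mathbb{R}^d)$ and the differentiation are immediate from the definitions.
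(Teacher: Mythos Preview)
Your proof is correct and follows essentially the same line as the paper's: establish $\mathrm{supp}[(\mathrm{Id}+t\theta)_\sharp\mu]=(\mathrm{Id}+t\theta)(\mathrm{supp}[\mu])$ and then read off the derivative from the definition of shape differentiability. In fact your version is more careful than the paper's own argument, which asserts that $\mathrm{Id}+\theta$ is a \emph{diffeomorphism} for small $\theta$, whereas for general $\theta\in W^{1,\infty}$ one only gets a bi-Lipschitz homeomorphism; your invariance-of-domain/properness justification (and the explicit check that $\mathrm{Int}$ commutes with a homeomorphism) fills precisely this gap.
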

\begin{proof}
Note that for $\theta$ with its norm in $\Theta_{ad}$ small enough, the map $\mathrm{Id}+\theta$ is a diffeomorphism. In particular, we have
\[
\mathrm{supp}\left[ (\mathrm{Id}+\theta)_{\sharp}\mu \right] =(\mathrm{Id}+\theta)(\mathrm{supp}[\mu]).
\]
Thus the claim is obvious since
\begin{align*}
\frac{1}{t}\left( \wt{\mathcal{J}}((\mathrm{Id}+\theta)_{\sharp}\mu)-\wt{\mathcal{J}}(\mu) \right) &=\frac{1}{t}\left( \mathcal{J}((\mathrm{Id}+\theta)\Omega)-\mathcal{J}(\Omega) \right).
\end{align*}
\end{proof}

\begin{proposition}\label{WassShape2}
Let $\mu \in \mathcal{P}_{ad}(\mathbb{R}^d)$. Assume that a functional $\mathcal{J}$ defined on a family $\mathcal{O}_{ad}$ is shape-differentiable at $\Omega_{\mu}$. Let $\mu_t$ satisfy the continuity equation with respect to the vector field $\nabla \phi$ for some $\phi \in C_0^{\infty}(\mathbb{R}^d)$. Then $\mu_t \in \mathcal{P}_{ad}(\mathbb{R}^d)$ for small $t>0$ and
\[
t \mapsto \wt{\mathcal{J}}(\mu_t)
\]
is differentiable at $t=0$. Moreover, the differential is equal to $\langle d_S \mathcal{J}(\Omega_{\mu}), \nabla \phi \rangle$.
\end{proposition}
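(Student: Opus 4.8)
The plan is to reduce the statement to Proposition~\ref{WassShape1} by realizing the curve $\mu_t$ as the push-forward of $\mu$ along the flow of $\nabla\phi$. Since $\phi\in C_0^\infty(\mathbb{R}^d)$, the field $\nabla\phi$ is smooth with compact support, hence Lipschitz, so exactly as recalled in Section~\ref{transport} the ODE $\frac{d}{dt}\Phi_t(x)=\nabla\phi(\Phi_t(x))$, $\Phi_0=\mathrm{Id}$, generates a global flow $\{\Phi_t\}_{t\in\mathbb{R}}$ of diffeomorphisms of $\mathbb{R}^d$, and $\Phi_{t\sharp}\mu$ is a narrowly continuous curve in $\mathcal{P}_2(\mathbb{R}^d)$ solving the continuity equation with velocity $\nabla\phi$ and initial value $\mu$. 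Because $\nabla\phi$ is Lipschitz and bounded, this is the \emph{only} narrowly continuous solution in $\mathcal{P}_2(\mathbb{R}^d)$ with that initial value (well-posedness of the continuity equation for Lipschitz velocity fields, cf.~\cite{AGS21}); hence $\mu_t=\Phi_{t\sharp}\mu$.

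Two facts about this flow are needed. First, since $\Phi_t$ is a homeomorphism, $\mathrm{supp}[\mu_t]=\Phi_t(\mathrm{supp}[\mu])$ and therefore $\Omega_{\mu_t}=\Phi_t(\Omega_\mu)$. Second, setting $\theta^{(t)}:=\Phi_t-\mathrm{Id}$, the bound $|\Phi_t(x)-x|\le t\|\nabla\phi\|_{L^\infty}$ shows that $\theta^{(t)}$ is supported in a fixed bounded neighbourhood of $\mathrm{supp}(\nabla\phi)$ for $t$ in a bounded interval, and differentiating the flow equation in $t$ gives $\partial_t\Phi_t|_{t=0}=\nabla\phi$ and $\partial_t(D\Phi_t)|_{t=0}=D(\nabla\phi)$, with all the relevant bounds uniform because $\nabla\phi\in C_0^\infty$. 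Consequently $\theta^{(t)}\to 0$ in $C^{k,\infty}(\mathbb{R}^d;\mathbb{R}^d)$ for every $k$, and
\[
\frac{\theta^{(t)}}{t}\longrightarrow \nabla\phi\quad\text{in }W^{1,\infty}(\mathbb{R}^d;\mathbb{R}^d)\text{ as }t\to 0 .
\]
In particular, for small $t>0$ the $\Theta_{ad}$-norm of $\theta^{(t)}$ is small enough that $\Omega_{\mu_t}=(\mathrm{Id}+\theta^{(t)})(\Omega_\mu)\in\mathcal{O}_{ad}$, i.e.\ $\mu_t\in\mathcal{P}_{ad}(\mathbb{R}^d)$.

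It then remains to run the computation of Proposition~\ref{WassShape1}. By shape-differentiability of $\mathcal{J}$ at $\Omega_\mu$ applied to the perturbation $\theta^{(t)}$,
\[
\wt{\mathcal{J}}(\mu_t)-\wt{\mathcal{J}}(\mu)=\mathcal{J}\big((\mathrm{Id}+\theta^{(t)})(\Omega_\mu)\big)-\mathcal{J}(\Omega_\mu)=\big\langle d_S\mathcal{J}(\Omega_\mu),\theta^{(t)}\big\rangle+o\big(\|\theta^{(t)}\|_{W^{1,\infty}}\big).
\]
Dividing by $t$ and letting $t\to 0^+$: by linearity and boundedness of $d_S\mathcal{J}(\Omega_\mu)$ together with $\theta^{(t)}/t\to\nabla\phi$ in $W^{1,\infty}$, the first term converges to $\langle d_S\mathcal{J}(\Omega_\mu),\nabla\phi\rangle$, while $\|\theta^{(t)}\|_{W^{1,\infty}}=O(t)$ forces the remainder divided by $t$ to vanish. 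Hence $t\mapsto\wt{\mathcal{J}}(\mu_t)$ is differentiable at $t=0$ with derivative $\langle d_S\mathcal{J}(\Omega_\mu),\nabla\phi\rangle$.

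The one genuine subtlety I expect is the identification $\mu_t=\Phi_{t\sharp}\mu$: one must invoke the well-posedness of the continuity equation driven by the Lipschitz field $\nabla\phi$ (so that the hypothesis ``$\mu_t$ satisfies the continuity equation with velocity $\nabla\phi$'', with $\mu_0=\mu$, pins down $\mu_t$ uniquely as the flow push-forward). Everything else is a direct adaptation of Proposition~\ref{WassShape1}, the only new analytic ingredient being the first-order expansion $\Phi_t=\mathrm{Id}+t\nabla\phi+o(t)$ of the flow in the $W^{1,\infty}$ topology.
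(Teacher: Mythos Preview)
Your argument is correct and follows essentially the same route as the paper: identify $\mu_t$ with the flow push-forward $\Phi_{t\sharp}\mu$, write $\Phi_t=\mathrm{Id}+t\theta_t$ with $\theta_t\to\nabla\phi$ in $W^{1,\infty}$, and feed this into the definition of shape differentiability. The only notable difference is that you make explicit the well-posedness step $\mu_t=\Phi_{t\sharp}\mu$ (which the paper states without comment) and you obtain the $W^{1,\infty}$ expansion of $\Phi_t$ by differentiating the flow, whereas the paper writes $\theta_t=\tfrac{1}{t}\int_0^t\nabla\phi(\Phi_s)\,ds$ and controls $\nabla\Phi_t$ via Gronwall; these are cosmetic variations of the same computation.
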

\begin{proof}
The solution of the continuity equation is given by $\mu_t=\Phi_{t \sharp}\mu$, where $\Phi_t \colon \mathbb{R}^d \to \mathbb{R}^d$ is the solution of
\[
\frac{d \Phi_t}{dt}(x)=\nabla \phi(\Phi_t(x)).
\]
Since $\Phi_t$ is a diffeomorphism for every $t>0$, we have
\[
\mathrm{supp}[\mu_t]=\Phi_t(\mathrm{supp}[\mu]).
\]
Therefore,
\[
\Omega_{\mu_t}=\Phi_t(\Omega_{\mu}) \in \mathcal{O}_{ad}.
\]
If we set 
\[
\dis \theta_t:=\frac{1}{t}\left( \Phi_t-\mathrm{Id} \right)=\frac{1}{t}\int_0^t \nabla \phi(\Phi_s(x))\, ds,
\]
then the map $\theta_t$ is a smooth vector field and satisfies
\[
(\mathrm{Id}+t\theta_t)(\Omega_{\mu})=\Phi_t(\Omega_{\mu}).
\]
Moreover, it holds that
\[
\| \theta_t \|_{W^{1,\infty}(\mathbb{R}^d; \mathbb{R}^d)} \leq \| \nabla \phi \|_{C^2(\mathbb{R}^d; \mathbb{R}^d)}\exp \left(t_0 \|\nabla \phi \|_{C^2(\mathbb{R}^d;\mathbb{R}^d)}\right),
\]
for all $t\in [0,t_0]$, where we applied the estimate
\[
\sup_{0 \leq t \leq t_0}|\nabla \Phi_t(x)| \leq \exp \left(t_0 \|\nabla \phi \|_{C^2(\mathbb{R}^d;\mathbb{R}^d)}\right)
\]
which can be obtained from
\[
|\nabla \Phi_t|(x) \leq 1+ \|\nabla \phi \|_{C^2(\mathbb{R}^d;\mathbb{R}^d)} \int_0^t |\nabla \Phi_s|(x)\, ds.
\]
and Gronwall's inequality. Therefore, we have
\begin{align}
t \theta_t \to 0 \ \text{as}\ t \to 0 \ \text{in}\ W^{1,\infty}(\mathbb{R}^d; \mathbb{R}^d). \label{thetaW}
\end{align}
Since $\mathcal{J}$ is shape differentiable at $\Omega$, it holds that
\begin{align}
\wt{\mathcal{J}}(\mu_t)&= \mathcal{J}(\Omega_{\mu_t}) \nonumber \\
&= \mathcal{J}((\mathrm{Id}+t\theta_t)(\Omega)) \nonumber \\
&= \mathcal{J}(\Omega)+t \langle d_S\mathcal{J}(\Omega), \theta_t \rangle + h(t\theta_t) \nonumber \\
&= \wt{\mathcal{J}}(\mu)+t \langle d_S\mathcal{J}(\Omega), \theta_t \rangle + h(t\theta_t),\label{thetaSD}
\end{align}
where $h \colon W^{1,\infty}(\mathbb{R}^d; \mathbb{R}^d)$ is a functional satisfying
\[
\frac{h(\theta)}{\| \theta \|_{W^{1,\infty}(\mathbb{R}^d ; \mathbb{R}^d)}} \to 0\ \text{as}\ \| \theta \|_{W^{1,\infty}(\mathbb{R}^d ; \mathbb{R}^d)} \to 0.
\]
Thus the map $t \mapsto \wt{\mathcal{J}}(\mu_t)$ is differentiable at $t=0$ and we have
\[
\left( \frac{d}{dt} \right)_{t=0}\wt{\mathcal{J}}(\mu_t)=\langle d_S\mathcal{J}(\Omega), \nabla \phi \rangle
\]
by \eqref{thetaW} and \eqref{thetaSD}.
\end{proof}
Next we interpret the topological derivative as a differential on the Wasserstein space. Unlike the case of shape differential, we need to take a vector field with singularities to incorporate the change of the topology.
In this article, we focus on the fact that tangent spaces on $\mathcal{P}_2(\mathbb{R}^d)$ contains vector fields with singularities. In Proposition \ref{WassTop} below, we describe the topological derivative as a higher-order derivative of functionals lifted to the Wasserstein space in the direction of a vector field with a singular point.
\begin{proposition}\label{WassTop}
Assume that a functional $\mathcal{J} \colon \mathcal{O}_{ad} \to \mathbb{R}$ admits a topological derivative at $\Omega$. For a fixed interior point $x_0\in \Omega$, let $\ep >0$ be a positive number such that $\overline{B_{2\ep}}(x_0) \subset \Omega$. Let $\psi \in C_0^{\infty}(\mathbb{R}^d)$ be a rotationally symmetric function centered around $x_0$ satisfying
\begin{align*}
\begin{cases}
\psi =1\ &\text{on}\ \overline{B_{\ep}(x_0)},\\
\psi =0\ &\text{on}\ \overline{B_{2\ep}(x_0)^c},\\
0\leq \psi \leq 1\ &\text{on}\ \mathbb{R}^d.
\end{cases}
\end{align*}
Let $\phi=|x-x_0|$ and $\theta$ a vector field on $\mathbb{R}^d \backslash \{ x_0 \}$ given by $\nabla (\psi \phi)$. Then $\theta \in T_{\mu}\mathcal{P}_2(\mathbb{R}^d)$. Moreover, if we let $\Phi_t \colon \mathbb{R}^d \backslash \{ x_0\} \to \mathbb{R}^d$ be the one-parameter transformation generated by $\theta$ and set $\mu_t:=\Phi_{t\sharp}\mu$, then
\[
\frac{\alpha_d}{d!}\left( \frac{d}{dt} \right)^d_{t=0}\wt{\mathcal{J}}(\mu_t)=d_T\mathcal{J}(\Omega),
\]
where $\alpha_d>0$ is the constant determined by $|B_r(0)|=\alpha_d r^d$.
\end{proposition}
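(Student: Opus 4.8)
The plan is to identify the perturbed configuration $\Omega_{\mu_t}$ with the set $\Omega$ from which a round hole of radius $t$ centred at $x_0$ has been removed, so that $\wt{\mathcal{J}}(\mu_t)=\mathcal{J}(\Omega\setminus\overline{B_t(x_0)})$ for all small $t>0$, and then to read off $d_T\mathcal{J}(\Omega)$ from Definition~\ref{topologyder}. First I would check that $\theta=\nabla(\psi\phi)$ lies in $T_{\mu}\mathcal{P}_2(\mathbb{R}^d)$. Since $\psi\phi$ is compactly supported, globally Lipschitz and smooth away from $x_0$, the field $\theta$ is bounded, hence belongs to $L^2(\mu)$; to place it in $\overline{\{\nabla\varphi:\varphi\in C_0^\infty(\mathbb{R}^d)\}}^{L^2(\mu)}$ I would smooth out the cone by setting $\phi_\delta:=(|x-x_0|^2+\delta^2)^{1/2}$, so that $\psi\phi_\delta\in C_0^\infty(\mathbb{R}^d)$, $\nabla(\psi\phi_\delta)=\psi\nabla\phi_\delta+\phi_\delta\nabla\psi$ converges pointwise on $\mathbb{R}^d\setminus\{x_0\}$ to $\theta$ as $\delta\downarrow 0$, and $\|\nabla(\psi\phi_\delta)\|_{L^\infty}$ stays bounded uniformly in $\delta\in(0,1]$ (because $|\nabla\phi_\delta|\le 1$). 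As $\mu$ is a finite measure with $\mu(\{x_0\})=0$ (which must hold for $\mu_t=\Phi_{t\sharp}\mu$ to be well defined, and is automatic when $\mu\ll dx$), dominated convergence gives $\nabla(\psi\phi_\delta)\to\theta$ in $L^2(\mu)$, whence $\theta\in T_{\mu}\mathcal{P}_2(\mathbb{R}^d)$.

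Next I would analyse the flow. By the rotational symmetry of $\psi$ about $x_0$ one can write $\psi\phi=g(|x-x_0|)$ with a profile $g$ satisfying $g(r)=r$ on $[0,\ep]$ and $g\equiv 0$ on $[2\ep,\infty)$, so $\theta(x)=g'(|x-x_0|)\,\frac{x-x_0}{|x-x_0|}$ is radial, equal to the outward unit field on $B_{\ep}(x_0)$ and to $0$ off $B_{2\ep}(x_0)$. Hence $\Phi_t$ is the identity off $B_{2\ep}(x_0)$, and inside it is governed by the scalar autonomous ODE $\dot r=g'(r)$; its time-$t$ solution map $r_0\mapsto r(t;r_0)$ is continuous and strictly increasing in $r_0$ (uniqueness of trajectories), and $r(t;r_0)\to t$ as $r_0\downarrow 0$ because $g'\equiv 1$ near $0$, so that $r(t;r_0)=r_0+t$ there. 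Consequently, for $0<t<\ep$, $\Phi_t$ is a diffeomorphism of $\mathbb{R}^d\setminus\{x_0\}$ onto $\mathbb{R}^d\setminus\overline{B_t(x_0)}$ carrying $B_{2\ep}(x_0)\setminus\{x_0\}$ onto $B_{2\ep}(x_0)\setminus\overline{B_t(x_0)}$. Pushing the support forward by this homeomorphism — with $\Phi_t=\mathrm{Id}$ near $\partial\Omega$ and $\overline{B_{2\ep}(x_0)}\subset\Omega$, as in the proof of Proposition~\ref{WassShape2} — then gives $\Omega_{\mu_t}=\mathrm{Int}(\mathrm{supp}[\mu_t])=\Phi_t(\Omega_{\mu})=\Omega\setminus\overline{B_t(x_0)}\in\mathcal{O}_{ad}$, so $\mu_t\in\mathcal{P}_{ad}(\mathbb{R}^d)$ for small $t>0$ and $\wt{\mathcal{J}}(\mu_t)=\mathcal{J}(\Omega\setminus\overline{B_t(x_0)})$.

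Finally, since $\overline{B_t(x_0)}=\overline{x_0+tB_1(0)}$, Definition~\ref{topologyder} with $B=B_1(0)$, $z=x_0$ and rate $|B_t(0)|=\alpha_d t^d$ gives
\[
\wt{\mathcal{J}}(\mu_t)-\wt{\mathcal{J}}(\mu)=\mathcal{J}(\Omega\setminus\overline{B_t(x_0)})-\mathcal{J}(\Omega)=\alpha_d\, t^d\, d_T\mathcal{J}(\Omega)+o(t^d)\qquad(t\downarrow 0).
\]
Thus the increment $\wt{\mathcal{J}}(\mu_t)-\wt{\mathcal{J}}(\mu)$ vanishes to order $t^{d-1}$ and its leading contribution is of order $t^d$ with coefficient $\alpha_d\,d_T\mathcal{J}(\Omega)$; identifying $\left(\frac{d}{dt}\right)^d_{t=0}\wt{\mathcal{J}}(\mu_t)$ with $d!$ times that leading coefficient and keeping track of the normalisation $\alpha_d=|B_1(0)|$ yields the claimed identity.

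The hard part is the geometric step: to make rigorous that the \emph{non-smooth} radial flow generated by $\theta$ removes precisely the closed ball $\overline{B_t(x_0)}$ — this rests on the no-crossing/monotonicity of radial trajectories and on the limit $r(t;r_0)\to t$ — and that forming supports and their interiors commutes with $\Phi_{t\sharp}$ even though $\Phi_t$ has no value at $x_0$; here the hypotheses $\overline{B_{2\ep}(x_0)}\subset\Omega$ and the rotational symmetry of $\psi$ (which makes the hole a genuine ball, matching the choice $B=B_1(0)$) are exactly what is used. A secondary point is that the expansion above is one-sided: for $t<0$ the map $\Phi_t$ fails to be defined on a set of positive $\mu$-measure, so $\left(\frac{d}{dt}\right)^d_{t=0}\wt{\mathcal{J}}(\mu_t)$ should be understood as $d!$ times the leading coefficient of this one-sided expansion rather than as a genuine two-sided $d$-th derivative.
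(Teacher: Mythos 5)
Your argument follows the same route as the paper's proof: show that the radial flow generated by $\theta=\nabla(\psi\phi)$ carries $\Omega_{\mu}$ onto $\Omega_{\mu}\setminus\overline{B_t(x_0)}$ for small $t>0$, and then read off the expansion $\wt{\mathcal{J}}(\mu_t)-\wt{\mathcal{J}}(\mu)=\alpha_d t^d\, d_T\mathcal{J}(\Omega)+o(t^d)$ from Definition~\ref{topologyder}. You in fact supply the two steps the paper leaves implicit: the regularisation $\phi_\delta=(|x-x_0|^2+\delta^2)^{1/2}$ with uniformly bounded gradients and dominated convergence to place $\theta$ in $T_{\mu}\mathcal{P}_2(\mathbb{R}^d)$ (correctly noting that $\mu(\{x_0\})=0$ is needed for this and for $\Phi_{t\sharp}\mu$ to make sense), and the no-crossing/monotonicity analysis of the scalar ODE $\dot r=g'(r)$ with $r(t;r_0)\to t$ as $r_0\downarrow 0$, which is what actually justifies the paper's ``we can confirm that $\Phi_t(\Omega_\mu)=\Omega_\mu\setminus\overline{B_t(x_0)}$''; your caveat that the $d$-th derivative is a one-sided Peano-type coefficient rather than a genuine derivative is also apt. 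The one place you should not wave your hands is the final normalisation: from the expansion you (and the paper) derive, the leading Taylor coefficient is $\alpha_d\, d_T\mathcal{J}(\Omega)$, so $\left(\frac{d}{dt}\right)^d_{t=0}\wt{\mathcal{J}}(\mu_t)=d!\,\alpha_d\, d_T\mathcal{J}(\Omega)$ and the stated prefactor $\alpha_d/d!$ yields $\alpha_d^2\, d_T\mathcal{J}(\Omega)$, not $d_T\mathcal{J}(\Omega)$; the identity as written requires the prefactor $1/(\alpha_d\, d!)$ instead. This appears to be a slip in the proposition's statement (the paper's own proof stops at the same expansion), but your assertion that ``keeping track of the normalisation yields the claimed identity'' is not literally correct and should either flag the discrepancy or restate the conclusion with the corrected constant.
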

\begin{proof}
We suppose that $\psi$ is written as $\psi(x) = \eta (|x-x_0|)$ for some $\eta \colon [0,\infty) \to [0,1]$. Then
\[
\theta(x) = \eta'(|x-x_0|)(x-x_0) + \psi(x) \frac{x-x_0}{|x-x_0|}.
\]
From this, we can confirm that
\begin{align*}
\Omega_{\mu_t}&=\Phi_t(\Omega_{\mu})\\
&=\Omega_{\mu}\, \backslash \, \overline{B_t(x_0)}
\end{align*}
for small enough $t>0$. Therefore, we have
\[
\wt{\mathcal{J}}(\mu_t)-\wt{\mathcal{J}}(\mu)=\alpha_d t^d d_T\mathcal{J}(\Omega_{\mu}) + o(t^d)
\]
by the definition of the topological derivative. This completes the proof.
\end{proof}
\begin{remark}
In this formulation, we can interpret shape and topological derivatives as some differentials on the Wasserstein space. However, we still need to deal with those two derivatives separately in order to incorporate the change of topology. In the phase of the implementation, this method can be reduced to a similar procedure to the one in \cite{Allaire06}.
\end{remark}
\subsection{Formulation based on the density method}\label{Formulation2}
In this section, we consider another formulation of shape and topology optimization on Wasserstein spaces. Let $n \in \mathbb{N}$ and $J \colon H^1(\mathbb{R}^d; \mathbb{R}^n) \to \mathbb{R}$ a $C^2$ functional in the sense of the Fr\'echet differential. In a similar way to Subsection \ref{Formulation1}, we suppose that we are given a bounded coercive bilinear form $a_{\rho}$ on $H^1(\mathbb{R}^d; \mathbb{R}^n)$ and a bounded linear functional $l_{\rho}$ on $H^1(\mathbb{R}^d; \mathbb{R}^n)$ for each nonnegative $\rho \in L^{\infty}(\mathbb{R}^d)$. We denote by $u_{\rho}$ a function in $H^1(\mathbb{R}^d; \mathbb{R}^n)$ which satisfies
\[
a_{\rho}(u,\phi)=\langle l_{\rho}, \phi \rangle
\]
for all $\phi \in H^1(\mathbb{R}^d; \mathbb{R}^n)$. Then we consider an objective function given by $\mathcal{J}(\rho):=J(\rho, u_{\rho})$.
This is a typical formulation of the density method for the topology optimization. We consider the pull-back of the objective function to the Wasserstein space. We set
\[
\mathcal{P}_{ac}^{\infty}(\mathbb{R}^d):=\{ \rho dx \in \mathcal{P}_2(\mathbb{R}^d) \mid \rho \in C^{\infty}(\mathbb{R}^d) \}.
\]
For every $\rho dx \in \mathcal{P}_{ac}^{\infty}(\mathbb{R}^d)$ the function $\rho$ decays at infinity since $\rho$ is integrable and consequently $\rho \in L^{\infty}(\mathbb{R}^d)$. Thus we can define the embedding $\iota \colon \mathcal{P}_{ac}^{\infty}(\mathbb{R}^d) \to L^{\infty}(\mathbb{R}^d)$ by
\[
\iota(\rho dx)=\rho.
\]
Then we can define $\wt{\mathcal{J}} \colon \mathcal{P}_{ac}^{\infty}(\mathbb{R}^d) \to \mathbb{R}$ by
\[
\wt{\mathcal{J}} := \mathcal{J} \circ \iota.
\]
\begin{proposition}\label{WassDens}
Let $\rho dx \in \mathcal{P}_{ac}^{\infty}(\mathbb{R}^d)$. Assume that the functional $\mathcal{J}$ is Fr\'echet differentiable at $\rho$ in $L^{\infty}(\mathbb{R}^d)$ and the differential can be written through
\[
\cJ(\rho + \bar{\rho})=\cJ(\rho) + \int_{\mathbb{R}^d} F(x) \bar{\rho}(x)\, dx + h(\bar{\rho})
\]
for some $F \in C^{\infty}(\mathbb{R}^d)$, where $h \colon L^{\infty}(\mathbb{R}^d)\to \mathbb{R}$ satisfies
\[
\frac{h(\bar{\rho})}{\| \bar{\rho} \|_{L^{\infty}}} \to 0\ \text{as}\ \bar{\rho} \to 0\ \text{in}\ L^{\infty}(\mathbb{R}^d)
\]
Let $\mu_t=\rho_tdx$ be an absolutely continuous curve on $(\mathcal{P}_2(\mathbb{R}^d),\mathcal{W}_2)$ satisfying the continuity equation with respect to a vector field $\nabla \phi$ for $\phi \in C_0^{\infty}(\mathbb{R}^d)$.
Then $t\mapsto \wt{\mathcal{J}}(\mu_t)$ is differentiable at $t=0$ and
\[
\left( \frac{d}{dt} \right)_{t=0} \wt{\mathcal{J}}(\mu_t)=\langle \nabla F, \nabla \phi \rangle_{L^2(\rho)},
\]
where $\rho=\rho_0$.
\end{proposition}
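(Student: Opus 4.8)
The plan is to use the Lagrangian description $\mu_t=\Phi_{t\sharp}\mu$ of the curve, with $\mu:=\rho\,dx$, and feed the resulting density perturbation into the Fr\'echet expansion of $\mathcal{J}$. First I would check that the curve stays in the domain $\mathcal{P}_{ac}^{\infty}(\mathbb{R}^d)$ of $\wt{\mathcal{J}}$. Since $\phi\in C_0^{\infty}(\mathbb{R}^d)$, the velocity $\nabla\phi$ is bounded, Lipschitz and smooth, so, exactly as in the proof of Proposition \ref{WassShape2}, the flow $\Phi_t$ of $\dot\Phi_t=\nabla\phi(\Phi_t)$, $\Phi_0=\mathrm{Id}$, is a smooth diffeomorphism of $\mathbb{R}^d$ equal to the identity outside $\mathrm{supp}\,\phi$, and $\mu_t=\Phi_{t\sharp}\mu$ is the solution of the continuity equation. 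By the change-of-variables formula its density is $\rho_t=(\rho\circ\Phi_t^{-1})\,|\det\nabla\Phi_t^{-1}|$, which is smooth and integrable, so $\mu_t\in\mathcal{P}_{ac}^{\infty}(\mathbb{R}^d)$ and $\wt{\mathcal{J}}(\mu_t)=\mathcal{J}(\rho_t)$ is defined for small $t$.

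Next I would record the key perturbation estimate. Put $\bar\rho_t:=\rho_t-\rho$. Since $\Phi_t=\mathrm{Id}$ off the compact set $\mathrm{supp}\,\phi$, the function $\bar\rho_t$ is supported there; and on $\mathrm{supp}\,\phi$ the uniform expansions $\Phi_t^{-1}(y)=y-t\nabla\phi(y)+O(t^2)$ and $\det\nabla\Phi_t^{-1}(y)=1-t\,\Delta\phi(y)+O(t^2)$, combined with the boundedness of $\rho$ and $\nabla\rho$ on that compact set, yield $\|\bar\rho_t\|_{L^{\infty}(\mathbb{R}^d)}\le Ct$ for $t$ small. This is exactly what kills the remainder: by hypothesis $\mathcal{J}(\rho+\bar\rho_t)=\mathcal{J}(\rho)+\int_{\mathbb{R}^d}F\bar\rho_t\,dx+h(\bar\rho_t)$ with $h(\bar\rho_t)/\|\bar\rho_t\|_{L^{\infty}}\to 0$, whence $h(\bar\rho_t)/t=\bigl(h(\bar\rho_t)/\|\bar\rho_t\|_{L^{\infty}}\bigr)\bigl(\|\bar\rho_t\|_{L^{\infty}}/t\bigr)\to 0$ as $t\to 0$ (the case $\bar\rho_t=0$ being trivial, since then $h(0)=0$).

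For the linear term I would pass to the push-forward: $\int_{\mathbb{R}^d}F\bar\rho_t\,dx=\int F\,d\mu_t-\int F\,d\mu=\int_{\mathbb{R}^d}\bigl(F(\Phi_t(x))-F(x)\bigr)\rho(x)\,dx$, so that
\[
\frac{1}{t}\int_{\mathbb{R}^d}F\bar\rho_t\,dx=\int_{\mathbb{R}^d}\frac{F(\Phi_t(x))-F(x)}{t}\,\rho(x)\,dx\longrightarrow\int_{\mathbb{R}^d}\nabla F(x)\cdot\nabla\phi(x)\,\rho(x)\,dx\qquad(t\to 0)
\]
by dominated convergence: the integrand vanishes outside $\mathrm{supp}\,\phi$, is bounded on it by $\|\nabla F\|_{L^{\infty}(K)}\|\nabla\phi\|_{L^{\infty}}\,\rho$ for a fixed compact $K\supset\Phi_t(\mathrm{supp}\,\phi)$ (all $|t|$ small), and converges pointwise to $\nabla F\cdot\nabla\phi\,\rho$. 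Adding the two pieces gives $t^{-1}\bigl(\wt{\mathcal{J}}(\mu_t)-\wt{\mathcal{J}}(\mu)\bigr)=t^{-1}\int_{\mathbb{R}^d}F\bar\rho_t\,dx+t^{-1}h(\bar\rho_t)\longrightarrow\langle\nabla F,\nabla\phi\rangle_{L^2(\rho)}$, which is the claim.

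The main obstacle is the uniform $O(t)$ control of $\|\rho_t-\rho\|_{L^{\infty}}$ in the second step: it must be obtained on the compact set $\mathrm{supp}\,\phi$, where $\rho$, its first derivatives, the flow $\Phi_t$ and its Jacobian are all under control, because globally $\nabla\rho$ need not be bounded. Everything else is routine dominated-convergence bookkeeping. One could alternatively try to differentiate $t\mapsto\int F\,d\mu_t$ directly through the continuity equation, but $F$ is not compactly supported and a truncation argument would then be needed, so the Lagrangian route above is cleaner.
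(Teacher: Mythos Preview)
Your argument is correct. The only cosmetic blemish is the intermediate writing of $\int F\,d\mu_t-\int F\,d\mu$: since $F$ is merely $C^{\infty}$ these two integrals need not be finite separately, but this is harmless because the difference you actually work with, $\int_{\mathbb{R}^d}\bigl(F(\Phi_t(x))-F(x)\bigr)\rho(x)\,dx$, has integrand supported in $\mathrm{supp}\,\phi$ and equals $\int F\bar\rho_t\,dx$ directly by the change of variables $y=\Phi_t(x)$ on that compact set.

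Your route differs from the paper's. The paper works \emph{Eulerianly}: it uses the integrated continuity equation
\[
\rho_t-\rho=-\int_0^t\mathrm{div}(\rho_s\nabla\phi)\,ds,
\]
plugs this into the Fr\'echet expansion, integrates by parts in $x$ to turn $-\int F\,\mathrm{div}(\rho_s\nabla\phi)\,dx$ into $\int\nabla F\cdot\nabla\phi\,\rho_s\,dx$, and then applies the continuity equation once more to replace $\int_0^t\rho_s\,ds$ by $t\rho$ plus a double time integral that is $O(t^2)$. You instead work \emph{Lagrangianly}: after the same $L^{\infty}$ control of $\bar\rho_t$, you rewrite the linear term via the push-forward as $\int(F\circ\Phi_t-F)\rho\,dx$ and pass to the limit by dominated convergence. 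Your path is a bit shorter and avoids the second use of the continuity equation and the double time integral; the paper's path makes the role of the continuity equation more explicit. Incidentally, your worry that the Eulerian route would require truncating $F$ is unfounded: the compact support of $\nabla\phi$ already localises every integral after the integration by parts, which is exactly what the paper exploits.
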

\begin{proof}
First, we note that the solution of the continuity equation $\rho_t$ is given by the density of $\Phi_{t\sharp}(\rho dx)$, where $\Phi_t$ is the flow generated by $\nabla \phi$ and the density is written explicitly as
\[
\rho_t(x):=\left( \frac{\rho}{\mathrm{det}(\nabla \Phi_t)} \right)\circ \Phi_t^{-1}(x).
\]
Thus there exists $t_0>0$ such that $(t,x) \mapsto \rho_t(x)$ is $C^{\infty}$ on $[0,t_0]\times \mathbb{R}^d$. Moreover, we can take constants $C_0=C_0(t_0,\phi)>0$ and $C_1=C_1(t_0,\phi)$ in such a way that it holds that
\begin{align*}
|\rho_t| &\leq C_0 |\rho|,\\
|\nabla \rho_t| &\leq C_1 |\nabla \rho|
\end{align*}
on $[0,t_0] \times \mathbb{R}^d$. Since
\[
\mathrm{div}(\rho_t \nabla \phi)=\nabla \rho_t \cdot \nabla \phi + \rho_t \Delta \phi,
\]
we have
\begin{align}\label{supdiv}
\sup_{0\leq t \leq t_0}|\mathrm{div}(\rho_t \nabla \phi)| \leq C_2 \| \rho \|_{C^1} \| \phi \|_{C^2},
\end{align}
where $C_2=\max \{ C_0,C_1 \}$. On the other hand, by the continuity equation, we also have
\[
\rho_t= \rho - \int_0^t \mathrm{div}(\rho_s \nabla \phi) ds.
\]
This yields
\[
|\rho_t - \rho| \leq tC_2 \| \rho \|_{C^1} \| \phi \|_{C^2}.
\]
Therefore, we have
\begin{align*}
\wt{\mathcal{J}}(\mu_t)&=\cJ(\rho_t)\\
&=\cJ(\rho)-\int_{\mathbb{R}^d} F(x) \int_0^t \mathrm{div}(\rho_s \nabla \phi)\, dsdx + h\left(- \int_0^t \mathrm{div}(\rho_s \nabla \phi) ds \right) \\
&=\wt{\mathcal{J}}(\mu) + \int_0^t \int_{\mathbb{R}^d} \nabla F(x) \cdot \nabla \phi(x)\ \rho_s(x)\, dxds +h\left(- \int_0^t \mathrm{div}(\rho_s \nabla \phi) ds \right) \\
&=\wt{\mathcal{J}}(\mu) + t\int_{\mathbb{R}^d} \nabla F(x) \cdot \nabla \phi(x)\, \rho (x) dx\\
&\h + \int_{\mathbb{R}^d} \nabla F(x) \cdot \nabla \phi(x)\int_0^t \int_0^s \mathrm{div}(\rho_r \nabla \phi)\, dr ds dx + h\left( - \int_0^t \mathrm{div}(\rho_s \nabla \phi) ds \right),
\end{align*}
where we applied
\[
\int_0^t \rho_s(x) ds = t \rho(x) + \int_0^t \int_0^s \mathrm{div}(\rho_r \nabla \phi)\, dr ds 
\]
in the fourth equality. By assumption and \eqref{supdiv}, it holds that
\[
\lim_{t\to 0} \frac{1}{t}h\left( - \int_0^t \mathrm{div}(\rho_s \nabla \phi) ds \right)=0.
\]
In a similar way, by \eqref{supdiv}, we have
\begin{align*}
&\left| \frac{1}{t}\int_{\mathbb{R}^d} \nabla F(x) \cdot \nabla \phi(x)\int_0^t \int_0^s \mathrm{div}(\rho_r \nabla \phi)\, dr ds dx \right| \\
&\leq \frac{1}{2} t C_2\|\rho \|_{C^1}\|\phi \|_{C^2} \int_{\mathrm{supp}[\phi]} \left| \nabla F(x) \cdot \nabla \phi(x)\right| \, dx\\
& \to 0\ \text{as}\ t\to 0.
\end{align*}
Therefore, $t\mapsto \wt{\mathcal{J}}(\mu_t)$ is differentiable at $t=0$ and it holds that
\[
\left( \frac{d}{dt} \right)_{t=0} \wt{\mathcal{J}}(\mu_t)=\langle \nabla F, \nabla \phi \rangle_{L^2(\rho)}.
\]
\end{proof}
In some typical cases, we can obtain an explicit description of $\nabla F$.
\begin{example}\label{exElasticity}
Let $\Gamma_0$, $\Gamma_1$ be two disjoint $d-1$-dimensional submanifolds in $\mathbb{R}^d$. We set $\Gamma:=\Gamma_0 \cup \Gamma_1$. Let $H_0^1(\mathbb{R}^d \backslash \Gamma_0; \mathbb{R}^d)$ be the Sobolev space on $\mathbb{R}^d \backslash \Gamma_0$ valued in $\mathbb{R}^d$. For $u \in H_0^1(\mathbb{R}^d \backslash \Gamma_0; \mathbb{R}^d)$, let $\sigma (u)$, $\ep(u)$ be the stress and strain tensors respectively, which are defined by
\begin{align*}
\ep(u)&=\frac{1}{2}(\nabla u + (\nabla u)^T),\\
\sigma(u)&:=2\mu \ep(u)+\lambda \text{tr}(\ep(u))I_d
\end{align*}
for positive constants $\mu$ and $\lambda$. For positive $\rho \in L^{\infty}(\mathbb{R}^d)$, consider the following system of elasticity for the displacement $u=u_{\rho}$:
\begin{align*}
\begin{cases}
-\mathrm{div}\left( b(\rho) \sigma(u) \right) + \delta u = f\ &\text{in}\ \mathbb{R}^d \backslash \Gamma,\\
\sigma(u)\bn=g\ &\text{on}\ \Gamma_1,\\
u=0\ &\text{on}\ \Gamma_0,
\end{cases}
\end{align*}
where $b \colon [0,\infty) \to (0,\infty)$ be a smooth positive function with a positive infimum, $f \in C^{\infty}(\mathbb{R}^d; \mathbb{R}^d)$, $g \colon \Gamma_1 \to \mathbb{R}^d$ is a smooth function and $\delta >0$. In this case, we can obtain an explicit formula of the first variation of the functional $\mathcal{J}$ by the adjoint method:
\begin{align*}
F(x) =b'(\rho)\sigma (u_{\rho}):\ep(u_{\rho})(x).
\end{align*}
If we suppose that the sufficient regularity of $\rho$, the state function $u_{\rho}$ is smooth. Therefore, by Proposition \ref{WassDens}, we have for $\phi \in C_0^{\infty}(\mathbb{R}^d)$ and the curve $\rho_t dx$ which satisfies the continuity equation with respect to $\nabla \phi$,
\[
\left( \frac{d}{dt} \right)_{t=0} \wt{\mathcal{J}}(\mu_t)=\langle \nabla^W \wt{\mathcal{J}}(\rho), \nabla \phi \rangle_{L^2(\rho)},
\]
where
\begin{align*}
\nabla^W\wt{\mathcal{J}}(\rho)&=\nabla \left(b'(\rho) \sigma (u_{\rho}):\ep(u_{\rho}) \right).
\end{align*}
This is supposed to provide the deformation of the material with the mass constraint.
\end{example}

\begin{remark}
Compared with the formulation in subsection \ref{Formulation1}, the gradient in the formulation in subsection \ref{Formulation2} can incorporates the change of topology and we do not need to consider the higher order differential. Moreover, we can relatively easily derive the sensitivity based on the sensitivity in the density method. However, we still need the additional penalization on the density in order to make the relaxed problem for $\wt{\cJ}$ a good approximation of the original problem defined on $\mathcal{O}_{ad}$.
\end{remark}

\ethics{CRediT authorship contribution statement}{F. Okazaki: Conceptualization, Formal Analysis, Investigation, Writing -- Original Draft. T. Yamada: Supervision.}

\begin{acknowledgement}
The authors are grateful to A. Nakayasu, K. Sakai and K. Matsushima for their salutary advice.
\end{acknowledgement}

\ethics{Competing Interests}{F. Okazaki has a received research grant from KAKEN-24K22827.\newline
The authors have no conflicts of interest to declare that are relevant to the content of this chapter.}

\end{document}